\theoremstyle{plain}
\newtheorem{theorem}{Theorem}
\newtheorem{proposition}[theorem]{Proposition}
\newtheorem*{theorem*}{Theorem}
\theoremstyle{definition}
\theoremstyle{remark}
\newtheorem*{remark}{Remark}
\newcommand{\Z}{\mathbb{Z}}
\newcommand{\N}{\mathbb{N}}
\newcommand{\im}{\operatorname{Im}}
\numberwithin{equation}{section}
\numberwithin{theorem}{section}
\begin{document}
\title{Interesting identities involving weighted representations of integers as sums of arbitrarily many squares}
\author{Min-Joo Jang}
\address{Department of Mathematics, University of Hong Kong, Pokfulam, Hong Kong}
\email{min-joo.jang@hku.hk}
\author{Ben Kane}
\address{Department of Mathematics, University of Hong Kong, Pokfulam, Hong Kong}
\email{bkane@hku.hk}
\author{Winfried Kohnen}
\address{Mathematisches Institut, Universit\"at Heidelberg, INF 288, 69210, Heidelberg, Germany}
\email{winfried@mathi.uni-heidelberg.de}
\author{Siu Hang Man}
\address{Mathematisches Institut, Georg-August-Universit\"at G\"ottingen, Wilhelmsplatz 1, 37073 G\"ottingen, Germany}
\email{siu-hang.man@mathematik.uni-goettingen.de}
\thanks{The research of the second author was supported by grants from the Research Grants Council of the Hong Kong SAR, China (project numbers HKU 17302515, 17316416, 17301317, and 17303618).}
\begin{abstract}
We consider the number of ways to write an integer as a sum of squares, a problem with a long history going back at least to Fermat. The previous studies in this area generally fix the number of squares which may occur and then either use algebraic techniques or connect these to coefficients of certain complex analytic functions with many symmetries known as modular forms, from which one may use techniques in complex and real analysis to study these numbers. In this paper, we consider sums with arbitrarily many squares, but give a certain natural weighting to each representation. Although there are a very large number of such representations of each integer, we see that the weighting induces massive cancellation and we furthermore prove that these weighted sums are again coefficients of modular forms, giving precise formulas for them in terms of sums of divisors of the integer being represented.
\end{abstract}
\subjclass[2010]{ 05A15,05A17, 11E45,11F11,11F37}
\date{\today}
\maketitle

\section{Introduction and statement of results}

There are many papers involving representations of integers as sums of a fixed number of squares and their relations with modular forms. In this paper, we consider representations of integers as the sum of arbitrarily many squares and relate this to quasimodular forms by weighting the representations in a certain way. To state the result, for a positive integer $n$, let $r^{\operatorname{e}}(n)$ (resp. $r^{\operatorname{o}}(n)$) denote the number of ways to write $n$ as the sum of an even (resp. odd) number of non-zero squares (taking $r^{\operatorname{e}}(0):=1$ and $r^{\operatorname{o}}(0):=0$ by convention because we include empty representations of $n=0$ in our count throughout the paper). We next see that the difference of these, namely $r^{\operatorname{e}-\operatorname{o}}(n):=r^{\operatorname{e}}(n)-r^{\operatorname{o}}(n)$, has an interesting relationship with certain sums of divisor functions which naturally arise as Fourier coefficients of weight two (quasi-)modular forms.
\begin{theorem}\label{thm:square}
For every $n\in \N$ we have 
\[
\sum_{m\in \Z} m^2 r^{\operatorname{e}-\operatorname{o}}\left(n-m^2\right)=\sum_{d\mid n} dc_d,
\] 
where 
\[
c_n=
\begin{cases}
2&\text{if $n\equiv 1 \pmod{2}$},\\
-3&\text{if $n\equiv 2 \pmod{4}$},\\
-1&\text{if $n\equiv 0 \pmod{4}$}.\\
\end{cases}
\]
Equivalently, writing $n=2^t n'$ with $2\nmid n'$, we have
\[
\sum_{m\in\Z} m^2 r^{\operatorname{e}-\operatorname{o}}(n-m^2)=\left(2-6\delta_{t>0}\right)\sum_{d\mid n'} d - 4\sum_{d\mid \frac{n}{4}} d.
\]
\end{theorem}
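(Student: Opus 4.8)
The plan is to pass to generating functions. Throughout set $q=e^{2\pi i\tau}$ and $\vartheta(\tau)=\sum_{m\in\Z}q^{m^2}$, the weight-$\tfrac12$ theta function, and let $D=q\frac{d}{dq}=\frac{1}{2\pi i}\frac{d}{d\tau}$. Reading $r^{\operatorname{e}-\operatorname{o}}(n)$ as the signed count $\sum(-1)^{k}$ over ordered representations of $n$ by $k$ nonzero signed squares, the generating series for a single nonzero signed square is $\vartheta-1$, and summing the geometric-type series over the number of parts gives, as an identity of formal power series,
\[
\sum_{n\ge0}r^{\operatorname{e}-\operatorname{o}}(n)q^{n}=\sum_{k\ge0}(-1)^{k}(\vartheta-1)^{k}=\frac{1}{\vartheta(\tau)}.
\]
(The sum is well defined since $\vartheta-1$ has no constant term, and it converges for $\im\tau$ large.) Since $\sum_{m}m^{2}q^{m^{2}}=D\vartheta$, multiplying the two series shows that the generating function of the left-hand side of the theorem is
\[
\sum_{n}\Bigl(\sum_{m\in\Z}m^{2}r^{\operatorname{e}-\operatorname{o}}(n-m^{2})\Bigr)q^{n}=\frac{D\vartheta}{\vartheta}=D\log\vartheta.
\]

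Next I would identify $D\log\vartheta$ as a quasimodular form. Because $\vartheta$ is modular of weight $\tfrac12$ on $\Gamma_0(4)$ (with a constant multiplier system that is annihilated by the logarithmic derivative), the function $D\log\vartheta$ transforms in weight $2$ with the standard quasimodular anomaly $\frac{kc(c\tau+d)}{2\pi i}$ for $k=\tfrac12$; equivalently $D\log\vartheta-\tfrac{1}{24}E_2$ is genuinely modular of weight $2$, where $E_2(\tau)=1-24\sum_{n\ge1}\sigma_1(n)q^{n}$. Since $\vartheta$ is nonvanishing on $\H$ and vanishes to finite order at the cusps, $D\log\vartheta$ is holomorphic on $\H$ and bounded at every cusp, so it lies in the three-dimensional space $\widetilde{M}_2(\Gamma_0(4))=\langle E_2(\tau),E_2(2\tau),E_2(4\tau)\rangle$.

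It then remains to match this against the right-hand side. Writing the divisor sum $\sum_{d\mid n}dc_d$ as a Lambert series and splitting divisors according to their residue mod $4$, I would use $\sum_{n\ge1}\sigma_1(n)q^{Nn}=\tfrac{1}{24}(1-E_2(N\tau))$ to obtain
\[
\sum_{n\ge1}\Bigl(\sum_{d\mid n}dc_d\Bigr)q^{n}=\tfrac{1}{12}\bigl(-E_2(\tau)+5E_2(2\tau)-4E_2(4\tau)\bigr),
\]
which lies in the same space and has the same quasimodular anomaly $\tfrac{1}{24}$ as $D\log\vartheta$. Comparing the constant term and the coefficients of $q$ and $q^{2}$ then forces $D\log\vartheta$ to equal this exact combination, proving the first formula. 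Finally I would check that the two closed forms in the statement agree by splitting the divisors of $n=2^{t}n'$ according to their $2$-adic valuation, a short direct computation.

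The main obstacle I anticipate is the second step: rigorously verifying that $D\log\vartheta$ is quasimodular of weight $2$ and \emph{holomorphic at all three cusps} of $\Gamma_0(4)$. This requires the precise transformation law of $\vartheta$ (its weight and multiplier) together with control of its order of vanishing at the cusps $0$ and $\tfrac12$, so that the logarithmic derivative has finite boundary values rather than poles. Once membership in the finite-dimensional space $\widetilde{M}_2(\Gamma_0(4))$ is established, the identification is routine coefficient comparison.
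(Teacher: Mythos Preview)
Your proposal is correct and shares its opening move with the paper: both identify $\sum_{n}r^{\operatorname{e}-\operatorname{o}}(n)q^{n}=1/\vartheta$ via the geometric series and hence recognize the left-hand side as the Fourier expansion of $D\log\vartheta$. From that point the two arguments diverge. The paper applies the Jacobi triple product to write
\[
\vartheta(\tau)=\prod_{n\ge1}(1-q^{n})^{-c_{n}},
\]
so that $D\log\vartheta=\sum_{n\ge1}\bigl(\sum_{d\mid n}dc_{d}\bigr)q^{n}$ drops out in one line, with no appeal to modularity or finite-dimensionality. You instead place $D\log\vartheta$ in the three-dimensional space $\widetilde{M}_{2}(\Gamma_{0}(4))$ and pin it down by matching the first three coefficients against the Lambert-series form of the right-hand side.

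Each approach has its advantages. The paper's route is entirely elementary and, more importantly, generalizes uniformly to all the unary theta functions $\vartheta_{h,N}$, which is how Theorems~\ref{thm:triangular}--\ref{thm:general} are obtained simultaneously. Your route makes the quasimodular structure explicit and gives a conceptual explanation for why such a closed form exists, but it would become heavier for general $h,N$ (larger level, larger spaces). Note also a mild circularity: the fact you flag as the main obstacle, that $\vartheta$ is nonvanishing on $\H$ (so that $D\log\vartheta$ is holomorphic), is most transparently seen from the very product expansion the paper uses; the paper even remarks that proving this nonvanishing directly from modularity and the valence formula is a separate ``amusing calculation''. So your argument is sound, but its cleanest justification of the key analytic input leans on the same identity that already finishes the paper's proof.
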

\begin{remark}
Enumerating the representatations of $n$ as a sum of arbitrarily many squares in the shape (for some $\ell \geq 0$ and $m_j\in \Z$)
\[
n=m^2 + \sum_{j=1}^{\ell} m_j^2,
\]
one can interpret the sum occurring on the left-hand side of Theorem \ref{thm:square} as a sum over all such representations weighted by $(-1)^{\ell}m^2$, where $m^2$ is a ``distinguished'' square occurring in the representation (note that every distinct $m_j^2$ plays the role of the ``distinguished'' square as we run through all $m$). 
\end{remark}
The phenomenon observed in Theorem \ref{thm:square} is not an isolated one. A similar result holds for representations of $n$ as a sum of arbitrarily many triangular numbers. Let $t^{\operatorname{e},+}(n)$ (resp. $t^{\operatorname{o},+}(n)$ denote the number of representations of $n$ as a sum 
\[
n=\sum_{j=1}^{\ell} T_{x_j}
\]
with $\ell$ even (resp. $\ell$ odd) where $T_{x}:=\frac{x(x+1)}{2}$, with $x\in\N$, denotes the $x$-th triangular number; throughout this paper we assume for triangular numbers that $x\in\N$ in order to remove ambiguity in the representations coming from the fact that $T_{-1}=T_0=0$.  Analogously to the case of squares, we set $t^{\operatorname{e}-\operatorname{o},+}(n):=t^{\operatorname{e},+}(n)-t^{\operatorname{o},+}(n)$ to be the excess of the number of such representations with an even number of triangular numbers. Here and throughout, we use `$+$' in a superscript to indicate that we only take elements in $\N$ instead of $\Z\setminus\{0\}$. 
\begin{theorem}\label{thm:triangular}
For every $n\in \N$, we have 
\[
\sum_{m\ge0} \frac{(2m+1)^2}{8}t^{\operatorname{e}-\operatorname{o},+}\left(n-T_m\right)=\sum_{d\mid n} (-1)^{d+1} d.
\]
\end{theorem}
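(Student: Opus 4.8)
The plan is to convert the identity into a statement about $q$-series and then recognise both sides as the $q$-expansion of one explicit weight-two quasimodular form, in the same spirit as the squares case. Throughout write $q=e^{2\pi i\tau}$ and set
\[
\Psi(\tau):=\sum_{m\geq 0}q^{T_m},
\]
the generating function of the triangular numbers, including the term $m=0$ (so $T_0=0$ contributes $1$). A representation counted by $t^{\mathrm{e},+}$ or $t^{\mathrm{o},+}$ is an ordered tuple of \emph{positive} triangular numbers, so it is recorded by a power of $\Psi-1=\sum_{m\geq 1}q^{T_m}$; summing the signed geometric series, which is legitimate in $\Z[[q]]$ because $\Psi-1$ has positive order in $q$, gives
\[
\sum_{n\geq 0}t^{\mathrm{e}-\mathrm{o},+}(n)\,q^n=\sum_{\ell\geq 0}(-1)^\ell\bigl(\Psi(\tau)-1\bigr)^\ell=\frac{1}{\Psi(\tau)}.
\]
This plays the same role as the identity $\sum_n r^{\mathrm{e}-\mathrm{o}}(n)q^n=1/\theta$ in the squares case.

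Next I would absorb the weight $\tfrac{(2m+1)^2}{8}$ into a differential operator. Since $T_m=\tfrac{(2m+1)^2-1}{8}$, we have $\tfrac{(2m+1)^2}{8}=T_m+\tfrac18$, so the ``distinguished-triangular'' generating function is
\[
\Phi(\tau):=\sum_{m\geq 0}\frac{(2m+1)^2}{8}q^{T_m}=q\frac{d}{dq}\Psi(\tau)+\frac18\Psi(\tau).
\]
Multiplying the previous display by $\Phi$ shows that the generating function of the left-hand side of Theorem \ref{thm:triangular} is
\[
\frac{\Phi(\tau)}{\Psi(\tau)}=q\frac{d}{dq}\log\Psi(\tau)+\frac18.
\]
The constant $\tfrac18$ affects only the coefficient of $q^0$, so for every $n\geq 1$ the theorem reduces to the single identity
\[
q\frac{d}{dq}\log\Psi(\tau)=\sum_{n\geq 1}\Bigl(\sum_{d\mid n}(-1)^{d+1}d\Bigr)q^n.
\]

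To establish this I would pass to $\eta$-quotients. By a classical identity (a consequence of the Jacobi triple product) one has $\Psi(\tau)=q^{-1/8}\,\eta(2\tau)^2/\eta(\tau)$. Taking the logarithmic derivative and using the standard evaluation $q\frac{d}{dq}\log\eta(c\tau)=\tfrac{c}{24}E_2(c\tau)$, with $E_2(\tau)=1-24\sum_{n\geq 1}\sigma(n)q^n$, gives
\[
q\frac{d}{dq}\log\Psi(\tau)=-\frac18+\frac16E_2(2\tau)-\frac{1}{24}E_2(\tau)=\frac{-3-E_2(\tau)+4E_2(2\tau)}{24}.
\]
On the arithmetic side, splitting divisors according to parity yields $\sum_{d\mid n}(-1)^{d+1}d=\sigma(n)-4\sigma(n/2)$ (with $\sigma(n/2):=0$ for odd $n$), whose generating series is precisely $\tfrac{-3-E_2(\tau)+4E_2(2\tau)}{24}$; comparing coefficients for $n\geq 1$ completes the argument. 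The only non-formal inputs are the eta-quotient expression for $\Psi$ and the logarithmic-derivative formula for $\eta$, so I expect the main (albeit classical) obstacle to be these modular facts — equivalently, confirming that the logarithmic derivative of the weight-$\tfrac12$ form $\Psi$ is the indicated weight-two quasimodular Eisenstein combination — rather than the combinatorial bookkeeping.
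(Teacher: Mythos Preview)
Your proof is correct and follows essentially the same approach as the paper: both compute the logarithmic derivative of the triangular-number theta series after expressing it as an infinite product (your $\eta$-quotient $\Psi=q^{-1/8}\eta(2\tau)^2/\eta(\tau)$ is exactly the Jacobi triple product identity the paper applies to $\tfrac12\vartheta_{h,2|h|}$), and both identify the reciprocal $1/\Psi$ with the generating series for $t^{\operatorname{e}-\operatorname{o},+}$ via the signed geometric series. The only cosmetic difference is that the paper packages the computation inside the general Proposition~2.1(2) for $\vartheta_{h,N}$ with $N=2|h|$ and then rescales $n\mapsto 8h^2n$, whereas you work directly with $\Psi$ and phrase the product-side evaluation in terms of $E_2$ rather than the coefficients $f_n$.
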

Moreover, the phenomenon is not restricted to squares or triangular numbers, but occurs for arbitrary representations of integers as sums of generalized polygonal numbers. For $k\in \N$ and $m\in\Z$, let 
\[
p_k(m):=\frac{(k-2)m^2-(k-4)m}{2}
\]
be the $m$-th \begin{it}generalized $k$-gonal number\end{it}. Following the definitions in the previous two cases, we define $P_k^{\operatorname{e}-\operatorname{o}}(n):= P_{k}^{\operatorname{e}}(n)- P_{k}^{\operatorname{o}}(n)$ with $P_{k}^{\operatorname{e}}(n)$ ($P_k^{\operatorname{o}}(n)$ resp.) being the number of ways to write $n$ as a sum of an even (odd resp.) number of non-zero generalized $k$-gonal numbers. 
\begin{theorem}\label{thm:kgonal} 
For $k>4$ we have 
\[
\sum_{m\in\Z}
\left(p_k(m)+\frac{(k-4)^2}{8(k-2)}\right)P_k^{\operatorname{e}-\operatorname{o}}\left(n-p_k(m)\right) =\sum_{d\mid n} de'_d,
\]
where
\[
e'_n=\begin{cases}
1&\text{if $n\equiv \pm(k-3) \pmod{k-2}$},\\
-1&\text{if $n\equiv 0, (k-2), \pm2(k-3)   \pmod{2(k-2)}$},\\
0&\text{otherwise.}
\end{cases}
\]
\end{theorem}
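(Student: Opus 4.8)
The plan is to translate the identity into a statement about Fourier coefficients of a single $q$-series, which will turn out to be the logarithmic derivative of a unary theta function. First I would introduce the theta-like series $\psi_k(\tau):=\sum_{m\in\Z}q^{p_k(m)}$ with $q=e^{2\pi i\tau}$. Since $k>4$, the equation $p_k(m)=0$ forces $m=0$, so $\psi_k-1=\sum_{m\neq0}q^{p_k(m)}=O(q)$ enumerates exactly the non-zero generalized $k$-gonal numbers. An ordered representation of $n$ as a sum of $\ell$ non-zero $k$-gonal numbers is generated by $(\psi_k-1)^\ell$, and weighting by $(-1)^\ell$ and summing the (formally convergent) geometric series gives
\[
\sum_{n\ge0}P_k^{\operatorname{e}-\operatorname{o}}(n)\,q^n=\sum_{\ell\ge0}(-1)^\ell(\psi_k-1)^\ell=\frac{1}{\psi_k(\tau)}.
\]
Completing the square yields $p_k(m)+\frac{(k-4)^2}{8(k-2)}=\frac{k-2}{2}(m-\alpha)^2$ with $\alpha=\frac{k-4}{2(k-2)}$, so the balanced theta series $\Theta(\tau):=\sum_{m\in\Z}q^{\frac{k-2}{2}(m-\alpha)^2}$ satisfies $\Theta=q^{(k-4)^2/(8(k-2))}\psi_k$. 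This explains the mysterious constant: the weighting factor $\sum_m\bigl(p_k(m)+\frac{(k-4)^2}{8(k-2)}\bigr)q^{p_k(m)}$ equals $q^{-(k-4)^2/(8(k-2))}\,q\frac{d}{dq}\Theta$, whereas $1/\psi_k=q^{(k-4)^2/(8(k-2))}/\Theta$, so that the left-hand side of the theorem is the $n$-th Fourier coefficient of
\[
\frac{q\frac{d}{dq}\Theta}{\Theta}=q\frac{d}{dq}\log\Theta(\tau),
\]
the logarithmic derivative of a weight $\tfrac12$ theta series, hence a quasimodular form of weight $2$.

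Next I would make these coefficients explicit through the Jacobi triple product. Matching $\sum_{m}x^m y^{m^2}$ with $y=q^{(k-2)/2}$ and $x=q^{-(k-4)/2}$ gives
\[
\psi_k(\tau)=\prod_{n\ge1}\bigl(1-q^{(k-2)n}\bigr)\bigl(1+q^{(k-2)(n-1)+1}\bigr)\bigl(1+q^{(k-2)n-1}\bigr),
\]
whose three families of exponents run precisely over the positive integers congruent to $0$, $1$, and $-1$ modulo $k-2$. Applying $q\frac{d}{dq}\log$ and expanding each resulting $\frac{aq^a}{1\mp q^a}$ as a Lambert series, the coefficient of $q^n$ for $n\ge1$ becomes
\[
-\sum_{\substack{d\mid n\\ d\equiv0\ (k-2)}}d\;+\;\sum_{\substack{d\mid n\\ d\equiv\pm1\ (k-2)}}(-1)^{n/d-1}\,d.
\]

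The final and most delicate step is to remove the parity factor $(-1)^{n/d-1}$, which a priori depends on $n$ and not merely on the residue class of $d$. Writing $(-1)^{n/d-1}d=d-2d\cdot[\,2\mid n/d\,]$ splits the second sum into $\sum_{d\equiv\pm1}d$ minus twice the contribution of the divisors with $n/d$ even. The map $d\mapsto2d$ is then a bijection from $\{d\mid n:\ d\equiv\pm1\ (k-2),\ 2\mid n/d\}$ onto $\{e\mid n:\ e\equiv\pm2\ (2(k-2))\}$, and carries $2\sum d$ to $\sum e$; substituting converts the coefficient into
\[
\sum_{\substack{d\mid n\\ d\equiv\pm1\ (k-2)}}d-\sum_{\substack{d\mid n\\ d\equiv0\ (k-2)}}d-\sum_{\substack{e\mid n\\ e\equiv\pm2\ (2(k-2))}}e,
\]
which is exactly $\sum_{d\mid n}d\,e'_d$ after the routine identifications $\pm(k-3)\equiv\mp1\pmod{k-2}$, $\{0,k-2\}\pmod{2(k-2)}=\{d\equiv0\ (k-2)\}$, and $\pm2(k-3)\equiv\mp2\pmod{2(k-2)}$. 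I expect this re-indexing to be the main obstacle, since it requires passing from residues modulo $k-2$ to residues modulo $2(k-2)$ and verifying that the three classes are disjoint; this holds for $k\ge6$, while the pentagonal case $k=5$ is degenerate and the overlapping contributions must be checked to cancel.
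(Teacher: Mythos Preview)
Your approach is essentially the same as the paper's: both compute the logarithmic derivative of the Jacobi triple product for the relevant unary theta series and read off the resulting Lambert-series coefficients. The only organizational difference is that the paper first rewrites each factor $(1+q^a)=(1-q^{2a})/(1-q^a)$ to obtain a uniform product $\prod(1-q^n)^{-a_n}$ for general $\vartheta_{h,N}$ and then specializes to $N=2(k-2),\ h=4-k$, whereas you work directly with $\psi_k$ and remove the alternating sign via the bijection $d\mapsto 2d$; these are the same manipulation in different dress, and your remark about the $k=5$ overlap is exactly the multiplicity convention the paper adopts.
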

\begin{remark}
Although the formulas in Theorem \ref{thm:kgonal} do not hold for $k=4$ and $k=3$, it is worth noting that one can interpret the formulas 
in Theorems \ref{thm:square}, \ref{thm:triangular}, and \ref{thm:kgonal} uniformly by counting the instances of the congruences occurring in Theorem \ref{thm:kgonal} with multiplicity.

\end{remark}

By completing the square, one may interpret the statements in Theorems \ref{thm:square}, \ref{thm:triangular}, and \ref{thm:kgonal} as statements about representations of integers as sums of squares with certain congruence conditions. Indeed, these formulas hold for representations of integers as sums of squares with arbitrary congruence conditions. Define $r_{h,N}^{\operatorname{e}}(n)$ (resp. $r_{h,N}^{\operatorname{o}}(n)$)  for $N\in\N$ and $h\in\Z$ satisfying $|h|<\frac{N}{2}$ by 
\begin{equation}\label{eqn:setdef}
\#\left\{(s_1, s_2,\dots, s_\ell)\in\left(\Z\setminus\{0\}\right)^\ell,\ \ell \text{ even (resp. odd) } \Bigg| \ \sum_{j=1}^\ell \left(s_j^2N^2+2s_j hN\right)=n \right\}.
\end{equation}
We then denote $r_{h,N}^{\operatorname{e}-\operatorname{o}}(n):=r_{h,N}^{\operatorname{e}}(n)-r_{h,N}^{\operatorname{o}}(n)$ as before. 
\begin{theorem}\label{thm:general}
Suppose $N\neq \pm 2h$ and $h\neq0$. We have 
\[
\sum_{m\equiv h\!\!\!\!\!\pmod{N}} m^2 r_{h,N}^{\operatorname{e}-\operatorname{o}}\left(n-m^2+h^2\right)=\sum_{d\mid n} de_d,
\]
where 
\[
e_n=\begin{cases}
1&\text{if $n\equiv \pm(N^2+2hN) \pmod{2N^2}$},\\
-1&\text{if $n\equiv 0, 2N^2, \pm(2N^2+4hN) \pmod{4N^2}$},\\
0&\text{otherwise.}
\end{cases}
\]
\end{theorem}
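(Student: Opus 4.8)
The plan is to pass to generating functions (as formal $q$-series) and recognise the left-hand side as a logarithmic derivative of a theta function. Set $q=e^{2\pi i\tau}$ and write $\Theta_{h,N}(q):=\sum_{m\equiv h\!\!\!\pmod N}q^{m^2}$. Each admissible building block $s^2N^2+2shN=(sN+h)^2-h^2$ with $s\neq0$ corresponds bijectively to an integer $m=sN+h\equiv h\pmod N$ with $m\neq h$, so the single-summand generating function is $f(q):=\sum_{s\neq0}q^{s^2N^2+2shN}=q^{-h^2}\Theta_{h,N}(q)-1$, which has no constant term because $|h|<\tfrac N2$ forces every exponent $(sN+h)^2-h^2$ to be strictly positive. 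Summing the sign-weighted geometric series over the number $\ell$ of summands gives
\[
\sum_{n\ge0}r_{h,N}^{\operatorname{e}-\operatorname{o}}(n)\,q^n=\sum_{\ell\ge0}(-1)^\ell f(q)^\ell=\frac{1}{1+f(q)}=\frac{q^{h^2}}{\Theta_{h,N}(q)}.
\]
I would then convolve with $\sum_{m\equiv h\!\!\!\pmod N}m^2q^{m^2-h^2}=q^{-h^2}\,q\frac{d}{dq}\Theta_{h,N}(q)$, which is precisely the shape of the left-hand side after the substitution $n\mapsto n-m^2+h^2$. The two powers of $q^{h^2}$ cancel, so the entire left-hand side collapses to
\[
\sum_{n\ge0}\Bigg(\sum_{m\equiv h\!\!\!\pmod N}m^2\,r_{h,N}^{\operatorname{e}-\operatorname{o}}(n-m^2+h^2)\Bigg)q^n=\frac{q\frac{d}{dq}\Theta_{h,N}(q)}{\Theta_{h,N}(q)}=q\frac{d}{dq}\log\Theta_{h,N}(q).
\]

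Next I would compute this logarithmic derivative explicitly. Writing $m=h+Nk$ gives $\Theta_{h,N}(q)=q^{h^2}\sum_{k\in\Z}q^{N^2k^2+2hNk}$, and the Jacobi triple product (with $Q=q^{N^2}$, $z=q^{2hN}$) factors the inner sum as
\[
\prod_{r\ge1}\bigl(1-q^{2N^2r}\bigr)\bigl(1+q^{N^2(2r-1)+2hN}\bigr)\bigl(1+q^{N^2(2r-1)-2hN}\bigr).
\]
The hypotheses $h\neq0$ and $N\neq\pm2h$ (together with $|h|<\tfrac N2$) guarantee that every exponent here is strictly positive and that the two families of ``$+$'' factors are genuinely distinct, so no degenerate constant factor appears and the logarithm is legitimate. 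Applying $q\frac{d}{dq}\log$ term by term turns the product into $h^2$ plus a sum of the elementary pieces $-\frac{aq^a}{1-q^a}$ and $\frac{bq^b}{1+q^b}$, one for each exponent $a$ or $b$ occurring above.

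Finally I would extract the coefficient of $q^n$ for $n\ge1$ (the constant $h^2$ being irrelevant there) and match it to $\sum_{d\mid n}d\,e_d$. The factors $1-q^{2N^2r}$ contribute, via $-\frac{aq^a}{1-q^a}=-a\sum_{j\ge1}q^{aj}$, the value $-d$ to every divisor $d\equiv0\pmod{2N^2}$ of $n$, i.e.\ to $d\equiv0,2N^2\pmod{4N^2}$. For the alternating factors I would use $\frac{bq^b}{1+q^b}=\frac{bq^b}{1-q^b}-\frac{2bq^{2b}}{1-q^{2b}}$ to split each into ordinary Lambert series: the first piece contributes $+d$ to divisors $d\equiv\pm(N^2+2hN)\pmod{2N^2}$ (the two signs coming from the two families of ``$+$'' factors), while the second contributes $-d$ to divisors $d\equiv\pm(2N^2+4hN)\pmod{4N^2}$. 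Collecting these reproduces exactly the three cases defining $e_d$. The main obstacle is this last bookkeeping step: one must track the residue classes of the exponents modulo $2N^2$ and $4N^2$ together with their signs and verify that they line up precisely with the definition of $e_d$ — and in particular check that in the boundary situations where the ``$+1$'' and ``$-1$'' conditions are simultaneously triggered, the two contributions cancel, consistent with the stated formula.
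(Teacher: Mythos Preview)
Your proposal is correct and follows essentially the same route as the paper: both arguments identify the left-hand side with the logarithmic derivative $q\frac{d}{dq}\log\vartheta_{h,N}$ via the geometric-series expansion of $1/\vartheta_{h,N}$, apply the Jacobi triple product, and read off the divisor sum. The only cosmetic difference is that the paper first rewrites each factor $(1+q^b)=(1-q^{2b})/(1-q^b)$ to put the product in the form $\prod_n(1-q^n)^{-e_n}$ before differentiating, whereas you differentiate first and then use the equivalent Lambert-series identity $\tfrac{bq^b}{1+q^b}=\tfrac{bq^b}{1-q^b}-\tfrac{2bq^{2b}}{1-q^{2b}}$; these are the same manipulation in a different order.
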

  We write $\sum_{m\equiv h\!\!\pmod{N}}$, $\sum'_{m\equiv h\!\!\pmod{N}}$, and $\sum_{m\equiv h\!\!\pmod{N}}^+$ to mean the sums over the underlying sets $m\in\Z$, $m\in\Z\setminus\{h\}$, and $m\in\N$ respectively.
\begin{remark}
In the case $N=\pm 2h$, a modified version of Theorem \ref{thm:general} (see the proof of Theorem \ref{thm:triangular}) holds where $r_{h,N}^{\operatorname{e}-\operatorname{o}}(n)$ is replaced with $r_{h,N}^{\operatorname{e}-\operatorname{o},+}(n)$, where as in the definition of $t^{\operatorname{e}-\operatorname{o},+}(n)$, the $+$ indicates that we only take $s_j\in\N$ in the definition \eqref{eqn:setdef}. For $h=0$, one may consider Theorem \ref{thm:general} to hold if one interprets $n\equiv 2N^2\pmod{4N^2}$ to occur with multiplicity $3$ and $n\equiv N^2\pmod{2N^2}$ to occur with multiplicity $2$ in the congruences in the definition of $e_n$.

\end{remark}
The proofs of the above theorems make use of product formulas for the unary theta functions (writing $q:=e^{2\pi i\tau}$)
\[
\vartheta_{h,N}(\tau) := \sum\limits_{n\equiv h\!\!\!\!\!\pmod{N}} q^{n^2}. 
\]
Indeed, the $c_n$ appearing in Theorem \ref{thm:square} are closely related to the well-known identity
\[
\theta(\tau)=\prod_{n\geq 1} \frac{\left(1-q^{2n}\right)^5}{\left(1-q^{4n}\right)^2\left(1-q^n\right)^2}=\prod_{n\geq 1}(1-q^n)^{-c_n}
\]
for the standard theta function 
\[
\theta(\tau):=\sum_{n\in\Z} q^{n^2}.
\]
Product formulas such as these naturally appeared in work of Bruinier, Ono, and the third author \cite{BKO} while investigating divisors of modular forms. A detailed investigation of these product formulas was then carried by Mason and the third author \cite{KM}. Mason and the third author obtained a recursive identity involving multinomial coefficients and the Fourier coefficients of the original modular form. Namely,
\[
c_n=2a(n)-\sum_{\substack{\nu_1+2\nu_2+\cdots +(n-1)\nu_{n-1}=n\\ \nu_1,\nu_2,\dots,\nu_{n-1}\ge0}}(-1)^{\nu_1+\nu_2+\cdots+\nu_{n-1}}\binom{c_1}{\nu_1}\binom{c_2}{\nu_2}\cdots\binom{c_{n-1}}{\nu_{n-1}},
\]
where $a(n)$ is the $n$-th Fourier coefficient. It would be interesting to see if one can use the representation from \cite{KM} in order to directly prove the identities, such as the one in Theorem \ref{thm:square}. Conversely, it may be interesting to further investigate whether the two different representations yield new identities between different combinatorial objects.

Theorem \ref{thm:square} implies that the sum giving $c_n$ in \cite{KM} exhibits massive cancellation, as one can see that there are exponentially many terms (as a function of $n$) in the sum, but the value is a constant depending only on $n$ modulo $4$. This cancellation is very delicate, and relies on the fact that the theta functions $\vartheta_{h,N}$ do not vanish in the upper half-plane. Even slightly modifying the theta function will yield a function for which the series defining $c_n$ does not exhibit this behavior. One such family of functions occurs by twisting the coefficients by a character $\chi$, defining 
\[
\theta_{\chi}(\tau):=\sum_{n\in\Z} \chi(n) q^{n^2}.
\]
However, Lemke Oliver \cite{LO} proved that there are only finitely many characters $\chi$ for which $\theta_{\chi}$ may be written as an $\eta$-quotient. 

Since this cancellation is governed by the fact that $\vartheta_{h,N}$ vanishes only at the cusps (i.e., it does not have a root in the upper half-plane), from this perspective it is natural to wonder whether one can directly prove that $\vartheta_{h,N}$ only vanishes at the cusps. An amusing calculation involving the full modularity properties of $\vartheta_{h,N}$ as given in \cite{Shimura} together with the valence formula and a careful evaluation of the resulting Gauss sums leads to such a proof (at least in the case when $N$ is the power of an odd prime).

\section{Proof of the theorems}

Theorems \ref{thm:square}, \ref{thm:triangular}, \ref{thm:kgonal}, and \ref{thm:general} are all essentially corollaries of one proposition. To state the proposition, for a given $h,N$ with $|h|\leq \frac{N}{2}$, we define $a_n=a_n(h,N)$ as follows.
  If $h\neq0$ and $N\neq \pm 2h$, then we have
\begin{equation}\label{eqn:e_n}
a_n:=e_n=\begin{cases}
1&\text{if $n\equiv \pm(N^2+2hN) \pmod{2N^2}$},\\
-1&\text{if $n\equiv 0, 2N^2, \pm(2N^2+4hN) \pmod{4N^2}$},\\
0&\text{otherwise.}
\end{cases}
\end{equation}
Similarly, for $h=0$ 
\begin{equation}\label{eqn:c_n}
a_n:=c_n=
\begin{cases}
2&\text{if $n\equiv N^2 \pmod{2N^2}$},\\
-3&\text{if $n\equiv 2N^2 \pmod{4N^2}$},\\
-1&\text{if $n\equiv 0 \pmod{4N^2}$},\\
0&\text{otherwise,}
\end{cases}
\end{equation}
while for $N=\pm2h$
\begin{equation}\label{eqn:f_n}
a_n:=f_n=
\begin{cases}
1&\text{if $n\equiv 8h^2 \pmod{16h^2}$},\\
-1&\text{if $n\equiv 0 \pmod{16h^2}$},\\
0&\text{otherwise.}
\end{cases}
\end{equation}
\begin{proposition}\label{prop:gen}
Let $h,N$ be given with $|h|\leq \frac{N}{2}$.
\noindent

\noindent
\begin{enumerate}[leftmargin=*,align=left,label={\rm(\arabic*)}]
\item 
If $|h|<\frac{N}{2}$, then we have 
\begin{equation}\label{eqn:compareCoeff}
\sum_{m\equiv h\!\!\!\!\!\pmod{N}}m^2 {r}_{h,N}^{\operatorname{e}-\operatorname{o}}\left(n-m^2+h^2\right)
=\sum_{d\mid n} d a_d.
\end{equation}
\item 
If $N=\pm 2h$, then we have
\begin{equation}\label{eqn:compareCoeff3}
\sideset{}{^+}{\sum}_{m\equiv h\!\!\!\!\!\pmod{N}}m^2 {r}_{h,N}^{\operatorname{e}-\operatorname{o},+}\left(n-m^2+h^2\right)=\sum_{d\mid n}da_d.
\end{equation}
\end{enumerate}
\end{proposition}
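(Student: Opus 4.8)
The plan is to recast the whole statement in terms of $q$-series and to reduce it to a product formula for the unary theta functions $\vartheta_{h,N}$. First I would record the generating function for the signed representation numbers. Writing $m=Ns+h$, so that $s^2N^2+2shN=(Ns+h)^2-h^2$, the condition $s\in\Z\setminus\{0\}$ becomes $m\equiv h\!\!\pmod N$ with $m\neq h$. Since counting ordered $\ell$-tuples amounts to taking the $\ell$-th power of the single-variable generating function, the signed count is produced by a geometric series: setting $g(\tau):=\sum_{m\equiv h\!\!\pmod N,\,m\neq h}q^{m^2-h^2}$, one has $\sum_{\ell\geq0}(-1)^\ell g^\ell=(1+g)^{-1}$, and because $|h|\leq N/2$ forces $m^2\geq h^2$ for every $m\equiv h$, one computes $1+g=q^{-h^2}\vartheta_{h,N}$. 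Hence
\[
\sum_{n}r_{h,N}^{\operatorname{e}-\operatorname{o}}(n)\,q^n=\frac{q^{h^2}}{\vartheta_{h,N}(\tau)}.
\]

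Second, I would interpret the left-hand side of \eqref{eqn:compareCoeff} as a coefficient of a product of two such series. Multiplying the identity above by $\sum_{m\equiv h\!\!\pmod N}m^2q^{m^2-h^2}$ produces exactly $\sum_n\bigl(\sum_{m\equiv h}m^2 r_{h,N}^{\operatorname{e}-\operatorname{o}}(n-m^2+h^2)\bigr)q^n$; and since $\sum_{m\equiv h}m^2q^{m^2}=q\frac{d}{dq}\vartheta_{h,N}$, this product collapses to the logarithmic derivative $\bigl(q\frac{d}{dq}\vartheta_{h,N}\bigr)/\vartheta_{h,N}=q\frac{d}{dq}\log\vartheta_{h,N}(\tau)$. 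Thus part (1) of the proposition is equivalent to the assertion that this logarithmic derivative has $q$-expansion $h^2+\sum_{n\geq1}\bigl(\sum_{d\mid n}d\,a_d\bigr)q^n$, the constant $h^2$ matching the $n=0$ term coming from $m=h$.

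Third — and this is the crux — I would establish the product formula $\vartheta_{h,N}(\tau)=q^{h^2}\prod_{n\geq1}(1-q^n)^{-a_n}$, in the spirit of the example $\theta(\tau)=\prod(1-q^n)^{-c_n}$ from the introduction. Applying the Jacobi triple product to $\vartheta_{h,N}=q^{h^2}\sum_s(q^{N^2})^{s^2}(q^{2hN})^s$ writes it as $q^{h^2}$ times a product of factors $(1-q^{2N^2m})$, $(1+q^{N^2(2m-1)+2hN})$, and $(1+q^{N^2(2m-1)-2hN})$. Rewriting each $1+q^k=(1-q^{2k})/(1-q^k)$ and collecting exponents by residue class yields the exponent $-a_n$ at $q^n$. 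Taking $q\frac{d}{dq}\log$ of the resulting product gives $h^2+\sum_n n\,a_n\,q^n/(1-q^n)=h^2+\sum_{n\geq1}\bigl(\sum_{d\mid n}d\,a_d\bigr)q^n$, which is precisely the expansion demanded by the second step; comparing coefficients of $q^n$ for $n\geq1$ then finishes part (1).

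Finally, I expect the real work to lie in the bookkeeping of this last step, namely verifying that the collected exponents reproduce exactly the case definitions \eqref{eqn:e_n}, \eqref{eqn:c_n}, \eqref{eqn:f_n}. The three cases correspond to the degeneracies of the triple product. The generic regime $|h|<N/2$, $N\neq\pm2h$ gives \eqref{eqn:e_n}; when $h=0$ the two families $q^{N^2(2m-1)\pm2hN}$ coincide, doubling an exponent and producing the value $-3$ in \eqref{eqn:c_n}. When $N=\pm2h$ one has $h\equiv-h\!\!\pmod N$, so the term $m=-h$ (distinct from $h$) contributes $q^{(-h)^2-h^2}=1$ to $g$; this nonzero constant term breaks the geometric-series step of the first paragraph. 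Restricting to $s_j\in\N$ removes that term, replaces $\vartheta_{h,N}$ by the half theta series $\sum_{m\in\N,\,m\equiv h}q^{m^2}$, and leads to the product formula with exponents $f_n$ and to part (2), equation \eqref{eqn:compareCoeff3}. Carefully tracking these residue classes and multiplicities, rather than any analytic subtlety, is where the effort is concentrated.
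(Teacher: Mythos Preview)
Your proposal is correct and follows essentially the same route as the paper: Jacobi triple product to obtain $\vartheta_{h,N}=(1+\delta_{N=\pm2h})\,q^{h^2}\prod_{n\geq1}(1-q^n)^{-a_n}$, logarithmic derivative to produce $h^2+\sum_{n\geq1}\bigl(\sum_{d\mid n}d\,a_d\bigr)q^n$, and a geometric-series expansion of $q^{h^2}/\vartheta_{h,N}$ to identify the other side with the signed representation numbers. The only cosmetic difference is order of presentation (the paper derives the product formula first and the combinatorial expansion second), and your handling of the $N=\pm2h$ degeneracy via the half theta series is exactly the paper's ``factor out the symmetry'' step.
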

\begin{proof}
Applying the Jacobi triple product identity gives that
\begin{align}
\vartheta_{h,N}(\tau)&=q^{h^2}\prod_{n\ge1}\left(1+q^{(2n-1)N^2+2hN}\right)\left(1+q^{(2n-1)N^2-2hN}\right)\left(1-q^{2nN^2}\right)\notag\\
&=\left(1+\delta_{N=\pm2h}\right)q^{h^2}\prod_{n\ge1}(1-q^n)^{-a_n}.\label{eqn:prod}
\end{align}
Taking the logarithmic derivative of \eqref{eqn:prod}, we obtain 
\begin{equation}\label{eqn:Thetalogdiff}
\frac{1}{2\pi i} \frac{\vartheta'_{h,N}(\tau)}{\vartheta_{h,N}(\tau)} = h^2 +\sum_{n\geq 1} \frac{na_nq^n}{1-q^n} = h^2+\sum_{n\geq 1}\sum_{m\geq 1}  na_nq^{mn}= h^2 +\sum_{n\geq 1} \sum_{d\mid n} d a_d q^n. 
\end{equation}

\noindent
(1) Under the assumption that $|h|<\frac{N}{2}$, we write
\[
\vartheta_{h,N}(\tau)=q^{h^2}\left(1-\left(1-q^{-h^2}\vartheta_{h,N}(\tau)\right)\right)=q^{h^2}\left(1-\left(-\sideset{}{'}\sum_{n\equiv h\!\!\!\!\!\pmod{N}} q^{n^2-h^2}\right)\right)
\]
 and note that
\[
\frac{1}{2\pi i} \vartheta'_{h,N}(\tau) = \sum_{n\equiv h\!\!\!\!\!\pmod{N}} n^2q^{n^2}.
\]
Since $|h|<\frac{N}{2}$, $h$ is the unique element among $n\equiv h\pmod{N}$ with $n^2$ minimal. Hence for $v:=\im(\tau)$ sufficiently large we have $\left|1-q^{-h^2}\vartheta_{h,N}(\tau)\right|<1$, and thus one may expand the geometric series to obtain 
\begin{align}
\frac{1}{2\pi i}\frac{\vartheta'_{h,N}(\tau)}{\vartheta_{h,N}(\tau)} &= \frac{q^{-h^2}}{2\pi i}\frac{\vartheta'_{h,N}(\tau)}{1-(1-q^{-h^2}\vartheta_{h,N}(\tau))} \notag\\
&=q^{-h^2}\sum_{n\equiv h\!\!\!\!\!\pmod{N}}  n^2 q^{n^2} \sum_{\ell\geq 0} \left(-\sideset{}{'}\sum_{n\equiv h\!\!\!\!\!\pmod{N}} q^{n^2-h^2}\right)^{\ell}\notag\\
&=q^{-h^2}\sum_{n\equiv h\!\!\!\!\!\pmod{N}}n^2q^{n^2} \sum_{m\ge0}\sum_{\ell\geq 0} (-1)^{\ell}\widetilde{r}_{h,N,\ell}\left(m+\ell h^2\right)q^m,\label{eqn:geom1}
\end{align}
where 
\begin{equation}\label{eqn:tilderdef}
\widetilde{r}_{h,N,\ell}(m+\ell h^2):=\#\left\{(k_1, k_2,\dots, k_\ell)\in\left(N\Z\setminus\{0\}+h\right)^\ell \Bigg| \ \sum_{j=1}^\ell k_j^2=m+\ell h^2 \right\}.
\end{equation}
Writing $k_j=s_jN+h$ with $s_j\in\Z\setminus\{0\}$ yields that 
\[
\sum_{j=1}^\ell k_j^2=m+\ell h^2\iff \sum_{j=1}^\ell \left(s_j^2N^2+2s_j hN\right)=m.
\]
Therefore, with notation as above, we may write \eqref{eqn:geom1} as 
\begin{equation}\label{eqn:geom2}
\frac{1}{2\pi i}\frac{\vartheta'_{h,N}(\tau)}{\vartheta_{h,N}(\tau)} =\sum_{n\ge0}\sum_{m\equiv h\!\!\!\!\!\pmod{N}}m^2 {r}_{h,N}^{\operatorname{e}-\operatorname{o}}\left(n-m^2+h^2\right)q^n.
\end{equation}
Comparing Fourier coefficients of \eqref{eqn:Thetalogdiff} with \eqref{eqn:geom2}, we obtain \eqref{eqn:compareCoeff}.

\noindent
(2) When $N=\pm 2h$, we note that $h$ and $-h$ are both minimal elements in terms of absolute value in $n\equiv h\pmod{N}$. It is hence natural to quotient out by the symmetry and we modify the definition of $\widetilde{r}_{h,N,\ell}$ in this case by defining $\widetilde{r}_{h,N,\ell}^+(m+\ell h^2)$ to be the number of elements in \eqref{eqn:tilderdef} with $k_j\in N\N+h$ for every $1\leq j\leq \ell$. To factor out by the symmetry, we rewrite $\vartheta_{h,2|h|}(\tau)$ as
\[
\vartheta_{h,2|h|}(\tau)=\sum_{n\equiv h\!\!\!\!\!\pmod{2|h|}} q^{n^2}=2\sideset{}{^+}\sum_{n\equiv h\!\!\!\!\!\pmod{2|h|}} q^{n^2}=2\sideset{}{^+}\sum_{n\text{ odd}} q^{h^2n^2},
\]
and, expanding the geometric series after accounting for the two minimal elements, 
\begin{align}
\frac{1}{2\pi i}\frac{\vartheta'_{h,2|h|}(\tau)}{\vartheta_{h,2|h|}(\tau)}
&= \frac{q^{-h^2}}{4\pi i}\frac{\vartheta'_{h,N}(\tau)}{1-\left(1-\frac{q^{-h^2}\vartheta_{h,N}(\tau)}{2}\right)}\notag\\
&=q^{-h^2}\sideset{}{^+}\sum_{n\text{ odd}}n^2q^{h^2n^2} \sum_{m\ge0}\sum_{\ell\geq 0} (-1)^{\ell}\widetilde{r}_{h,2|h|,\ell}^+\left(m+\ell h^2\right)q^m\notag\\
&=q^{-h^2}\sideset{}{^+}\sum_{n\text{ odd}}n^2q^{h^2n^2} \sum_{m\ge0}r^{\operatorname{e}-\operatorname{o},+}_{h,2|h|}\left(m\right)q^m\notag\\
&=\sum_{n\ge0}\sum_{m\ge0}h^2(2m+1)^2r^{\operatorname{e}-\operatorname{o},+}_{h,2|h|}\left(n-4h^2(m^2+m)\right)q^n. \label{eqn:geom3}
\end{align}
Comparing Fourier coefficients of \eqref{eqn:geom3} with \eqref{eqn:Thetalogdiff} together with \eqref{eqn:f_n} yields \eqref{eqn:compareCoeff3}.

\end{proof}

Theorem \ref{thm:general} now follows immediately from Proposition \ref{prop:gen} (1).
\begin{proof}[Proof of Theorem \ref{thm:general}]
When $h\neq 0$ and $N\neq \pm 2h$,  plugging \eqref{eqn:e_n} into \eqref{eqn:compareCoeff} yields Theorem \ref{thm:general}.
\end{proof}

We next move on to the special case of sums of $k$-gonal numbers in Theorems \ref{thm:square}, \ref{thm:triangular}, and \ref{thm:kgonal}, which are essentially nice ways to rewrite Proposition \ref{prop:gen}. We begin with the general case. 
\begin{proof}[Proof of Theorem \ref{thm:kgonal}]
Setting $N=2(k-2),\ h=4-k$ and $n\mapsto 8(k-2)n$ in \eqref{eqn:compareCoeff} and 
noting that
\[
r^{\operatorname{e}-\operatorname{o}}_{4-k,2(k-2)}(8(k-2)n)=P_k^{\operatorname{e}-\operatorname{o}}(n),
\] 
we have
\begin{align}
\sum_{m\equiv 4-k\!\!\!\!\pmod{2(k-2)}}& m^2 r_{4-k,2(k-2)}^{\operatorname{e}-\operatorname{o}}\left(8(k-2)n-m^2+(k-4)^2\right)
\notag\\
&=\sum_{m\in\Z}8(k-2)\left(p_k(m)+
\frac{(k-4)^2}{8(k-2)}
\right)P_k^{\operatorname{e}-\operatorname{o}}\left(n-p_k(m)\right)=\sum_{d\mid 8(k-2)n} de_d,\label{eqn:compareCoeff2}
\end{align}
where
\[
e_n=\begin{cases}
1&\text{if $n\equiv \pm8(k-2)(k-3) \pmod{8(k-2)^2}$},\\
-1&\text{if $n\equiv 0, 8(k-2)^2, \pm16(k-2)(k-3) \pmod{16(k-2)^2}$},\\
0&\text{otherwise.}
\end{cases}
\]
Notice that $e_n=0$ unless $8(k-2)\mid n$. Making the change of variables $d\mapsto 8(k-2)d$ in \eqref{eqn:compareCoeff2} and divididng both sides by $8(k-2)$, we complete the proof of Theorem \ref{thm:kgonal}.
\end{proof}

The $h=0$ case of Proposition \ref{prop:gen} (1) corresponds to the case of squares.

\begin{proof}[Proof of Theorem \ref{thm:square}]
For $h=0$, we rewrite \eqref{eqn:compareCoeff} using \eqref{eqn:c_n} as 
\[
\sum_{m\in N\Z}m^2 {r}_{0,N}^{\operatorname{e}-\operatorname{o}}\left(n-m^2\right)=\sum_{d\mid n} d c_d.
\]
Replacing $n$ by $N^2n$, and noting that 
\[
r_{0,N}^{\operatorname{e}-\operatorname{o}}(N^2n)=r^{\operatorname{e}-\operatorname{o}}(n),
\]
 we obtain
\[
\sum_{m\in \Z}N^2m^2 {r}_{0,N}^{\operatorname{e}-\operatorname{o}}\left(N^2n-N^2m^2\right)=\sum_{m\in \Z}N^2m^2 {r}^{\operatorname{e}-\operatorname{o}}\left(n-m^2\right)=\sum_{d\mid N^2n} d c_d.
\]
Moreover, since $c_d=0$ if $N^2\nmid d$, the sum on the right-hand side equals
\[
\sum_{d\mid N^2n} d c_d=\sum_{d\mid n} N^2 d c'_d,
\] 
where
\[
c'_n=
\begin{cases}
2&\text{if $n$ odd},\\
-3&\text{if $n\equiv 2 \pmod{4}$},\\
-1&\text{if $4\mid n$}.
\end{cases}
\]
Dividing both sides by $N^2$ gives Theorem \ref{thm:square}.
\end{proof}

Finally, we obtain the claim for sums of triangular numbers by using Proposition \ref{prop:gen} (2).
\begin{proof}[Proof of Theorem \ref{thm:triangular}]
Note that
\[
r^{\operatorname{e}-\operatorname{o},+}_{h,2|h|}(8h^2n-4h^2(m^2+m))=
r^{\operatorname{e}-\operatorname{o},+}_{1,2}(8n-4(m^2+m))=t^{\operatorname{e}-\operatorname{o},+}\left(n-T_m\right).
\]
Using this and replacing $n$ by $8h^2n$ in \eqref{eqn:compareCoeff3} yields that
\[
\sum_{m\ge0}h^2(2m+1)^2 t^{\operatorname{e}-\operatorname{o},+}\left(n-T_m\right)=\sum_{d\mid 8h^2n}df_d=\sum_{d\mid n} 8h^2 df'_d,
\]
where
\[
f'_n=\begin{cases}
1&\text{if $n$ odd},\\
-1&\text{if $n$ even}.\\
\end{cases}
\]
The last equality is valid because $f_d=0$ unless $8h^2\mid d$. Dividing both sides by $8h^2$, we have 
\[
\sum_{m\ge0}\frac{(2m+1)^2}{8}t^{\operatorname{e}-\operatorname{o},+}\left(n-T_m\right)=\sum_{d\mid n} df'_d= \sum_{d\mid n}(-1)^{d+1} d,
\]
as claimed in Theorem \ref{thm:triangular}.
\end{proof}

\begin{section}*{Acknowledgements}
The authors thank the anonymous referee for their careful reading and helpful comments on the earlier version of this paper.
\end{section}


\begin{thebibliography}{99}
\bibitem{BKO} J. Bruinier, W. Kohnen, and K. Ono, \begin{it}The arithmetic of the values of modular functions and the divisors of modular forms\end{it}, Compositio Math. \textbf{130} (2004),  552--566.

\bibitem{KM}W. Kohnen and G. Mason, \begin{it}On generalized modular forms and their applications\end{it}, Nagoya Math J. \textbf{192} (2008), 119--136. 
\bibitem{LO} R. Lemke Oliver, \begin{it}Eta-quotients and theta functions\end{it}, Adv. Math \textbf{241} (2013), 1--17.

\bibitem{Shimura}G. Shimura, \begin{it}On modular forms of half integral weight\end{it}, Ann. Math \textbf{97} (1973), 440--481.
\end{thebibliography}
\end{document}